\documentclass{amsart}
\usepackage{}
\usepackage{amsfonts}
\usepackage{amsfonts,amssymb,amsmath,amsthm}
\usepackage{mathrsfs}
\usepackage{graphicx}
\usepackage{url}
\usepackage{enumerate}

\urlstyle{sf}
\newtheorem{theorem}{Theorem}[section]
\newtheorem{lemma}[theorem]{Lemma}
\newtheorem{proposition}[theorem]{Proposition}

\theoremstyle{definition}

\newtheorem{remark}{Remark}
\numberwithin{equation}{section}

\DeclareMathOperator{\vol}{vol}\DeclareMathOperator{\id}{id}

\DeclareMathOperator{\Pf}{Pf}\DeclareMathOperator{\V}{V}

\author{Wei Zhao}
\address{
Department of Mathematics\\
East China University of Science and Technology\\
Shanghai, China}
\email{szhao\underline{ }wei@yahoo.com}

\keywords{Gauss-Bonnet-Chern formula, Finsler manifold, index theory, Thom form}
\subjclass[2010]{Primary 53B40, Secondary 47A53}
\begin{document}

\title[]{A simple proof of the Gauss-Bonnet-Chern formula for Finsler manifolds}

\begin{abstract}From the point of view of index theory, we give a simple proof of a Gauss-Bonnet-Chern formula for all Finsler manifolds by the Cartan connection. Based on this, we establish a Gauss-Bonnet-Chern formula for any metric-compatible connection.
\end{abstract}
\maketitle

\section{Introduction}
In \cite{Ch1,Ch2}, S. S. Chern gave a intrinsic proof of the Gauss-Bonnet-Chern (GBC) formula for all oriented closed $n$-dimensional Riemannian manifolds $(M,g)$
\[
-\int_M\mathbf{\Omega}=\chi(M),\eqno(1.1)
\]
where
\begin{align*}
\mathbf{\Omega}&=\left\{
\begin{array}{lll}
&\frac{(-1)^{p-1}}{2^{2p}\pi^pp!}\epsilon_{i_1\ldots i_{2p}}\Omega_{i_1}^{i_2}\wedge\cdots\wedge \Omega_{i_{2p-1}}^{i_{2p}},&n=2p,\\
\\
&0,&n=2p+1.
\end{array}
\right.
\end{align*}
and $(\Omega_i^j)$ is the local curvature form of the Levi-Civita connection on $M$. Let $\pi:SM\rightarrow M$ be the sphere bundle of $M$ and $S_xM$ be the fibre on $\pi^{-1}(x)$. Chern's idea is to show that $\pi^*\mathbf{\Omega}$ is the derivative of a "total curvature" $\Pi$ on $SM$ satisfying $\Pi|_{S_xM}$ is the normalized volume form of the fibre. To realize this, Chern constructed two polynomials $\Phi_k$, $\Psi_k$ miraculously and found the important relationship
\[
d\Phi_k=\Psi_{k-1}+\frac{n-2k-1}{2(k+1)}\Psi_k.\eqno(1.2)
\]
By (1.2), he succeed in constructing the total curvature
\begin{align*}
\Pi=\left\{
\begin{array}{lll}
&\frac{1}{(2\pi)^p}\overset{p-1}{\underset{k=0}{\sum}}\frac{(-1)^k}{(2p-2k-1)!!k!2^k}\Phi_k,&n=2p,\\
\\
&\frac{1}{\pi^p2^{2p+1}p!}\overset{p}{\underset{k=0}{\sum}}(-1)^{k+1}\left(
\begin{array}{lll}
p\\
k
\end{array}
\right)\Phi_k,&n=2p+1.
\end{array}
\right.
\end{align*}
such that
\[
\pi^*\mathbf{\Omega}=d\Pi.\eqno(1.3)
\]
Thus,
pulling back (1.3) by a unit vector with isolated zeros and using the Poincar\'e-Hopf theorem, one can obtain the formula (1.1).

Let $(M,F)$ be a $n$-dimensional Finsler manifold. Denote by $\pi:SM\rightarrow M$ the projective sphere bundle and $\pi^*TM$ the pull-back bundle. $F$ induces a natural Riemannian metric on $\pi^*TM$. There are many important linear connection on $\pi^*TM$, but none of them is both "torsion-free" and "metric-compatible". For example, the Cartan connection is metric-compatible while the Chern connection is torsion-free. Refer to \cite{AIM,BCS} for other interesting connections.

It is natural to ask whether an analogue of (1.1) still holds for Finsler manifolds. And it begins with a work of Lichnerowicz's\cite{Li}. In that paper, Lichnerowicz obtained a GBC formula for Cartan-Berwald spaces by the Cartan connection. One interesting feature of a $n$-dimensional Cartan-Berwald manifold is $\V(x)=\vol(\mathbb{S}^{n-1})$, where $\V(x)$ is the Riemannian volume of $S_xM$ induced by $F$ (see \cite{BC,BS} or Section 2 below). Fifty years later, Bao and Chern\cite{BC} reconsidered this problem and established a GBC formula for Finselr manifolds with $\V(x)=\text{constant}$ by using the Chern connection. In the same year, Shen\cite{Sh1} obtained several formulas of GBC type by the Cartan connection for a certain class of Finsler manifolds. Recently, Lackey\cite{L} used a nice trick to deal with $\V(x)$ and generalized the result of Bao and Chern\cite{BC} to all Finsler manifolds. In fact, Lackey established a GBC formula for any torsion-free connection. The same technique also appeared in an unpublished work of Shen\cite{Sh2} and a GBC formula for any metric-compatible connection was established.

All the methods used in References\cite{Li,BC,Sh1,L,Sh2,L} are inspired by Chern's original idea presented before. It should be noticeable that $\Phi_k$ and $\Psi_k$ constructed by the Chern connection don't satisfy (1.2) while $\Pi$ constructed by the Cartan connection is not a "total curvature". Hence, whichever connection is chosen, one needs a lot of techniques to calculate and correct (1.2) and (1.3), not to mention an arbitrary torsion-free or metric-compatible connection.

Note that the GBC formula (1.1) is the simplest case of the Atiyah-Singer index theorem\cite{BGV,MQ}. The purpose of this paper is to give a simple proof of a GBC formula for any metric-compatible connection and for all Finsler manifold from the point of view of index theory.
Given any metric-compatible connection $D$ on $\pi^*TM$, we define
\begin{align*}
\mathbf{\Omega}^D&=\left\{
\begin{array}{lll}
&\frac{(-1)^{p-1}}{2^{2p}\pi^pp!}\epsilon_{i_1\ldots i_{2p}}\Omega_{i_1}^{i_2}\wedge\cdots\wedge \Omega_{i_{2p-1}}^{i_{2p}},&n=2p,\\
\\
&0,&n=2p+1.
\end{array}
\right.
\end{align*}
where $(\Omega^i_j)$ is the curvature of $D$. First, for the Cartan connection $\nabla$, we have
\begin{theorem}
Let $(M,F)$ be a closed Finsler $n$-manifold and $X$ be a vector field with isolated zeros $\{x_i\}$. Then we have
\[
-\int_{M}[X]^*\left(\frac{\mathbf{\Omega}^\nabla+\mathfrak{D}}{\V(x)}\right)=\frac{\chi(M)}{\vol(\mathbb{S}^{n-1})},
\]
where $[X]: M\backslash\cup\{x_i\}\rightarrow SM$ is the section induced by $X$, $\V(x)$ is the Riemannian volume of $S_xM$, and $\mathfrak{D}$ is some $n$-form on $SM$.
\end{theorem}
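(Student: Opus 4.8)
The plan is to realize the left-hand side as the pullback of a closed $n$-form on $SM$ whose restriction to each fibre $S_xM$ is the (un-normalized) volume form, so that integrating against a section with isolated zeros and applying the Poincaré-Hopf theorem produces $\chi(M)$. From the index-theoretic viewpoint, I would not try to reprove Chern's identity (1.2) for the Cartan connection directly; instead I would construct a \emph{Thom form} $\mathcal{T}$ for the bundle $\pi^*TM\to SM$ using the Mathai-Quillen formalism applied to the metric-compatible connection $\nabla$. The Mathai-Quillen construction is canonical: given any metric connection on a rank-$n$ Euclidean bundle, it yields a closed Thom form whose Berezin integral packages the curvature $(\Omega^i_j)$ into the Pfaffian, and whose fibre integral is $1$. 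This is precisely the structure that makes metric-compatibility (rather than torsion-freeness) the natural hypothesis.

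First I would recall the Mathai-Quillen-Thom form on the total space of $\pi^*TM$, restrict it along the tautological (diagonal) section of $\pi^*TM\to SM$ to obtain the Euler form, and identify its leading piece with $\mathbf{\Omega}^\nabla$. The discrepancy between this canonical Euler form and $\mathbf{\Omega}^\nabla$ is where the correction term $\mathfrak{D}$ enters: because the Cartan connection has nonvanishing vertical torsion, the tautological section is not parallel, and the pullback of the Thom form acquires additional terms built from the second fundamental form / Cartan tensor. I would define $\mathfrak{D}$ to be exactly this difference, so that $\mathbf{\Omega}^\nabla+\mathfrak{D}$ equals the (fibre-integral-normalized) Euler form coming from $\mathcal{T}$, which is closed on $SM$ by the Mathai-Quillen closedness. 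Dividing by $\V(x)$ normalizes the fibre integral from $\vol(\mathbb{S}^{n-1})$ to match the constant on the right-hand side, following Lackey's trick for handling the non-constant fibre volume.

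The final step is the standard degree argument. Given $X$ with isolated zeros $\{x_i\}$, the section $[X]:M\setminus\bigcup\{x_i\}\to SM$ pulls back the closed normalized Euler form; on the complement of small geodesic balls around each $x_i$ the form is exact, and Stokes' theorem reduces the integral over $M$ to a sum of integrals over small spheres $\partial B_\epsilon(x_i)$. Each such boundary integral computes the local degree of $X$ at $x_i$, i.e. its index, via the fact that the Thom form restricts to the fibre volume form; summing the local indices and invoking Poincaré-Hopf gives $\chi(M)$, with the factor $1/\vol(\mathbb{S}^{n-1})$ accounting for the fibre normalization.

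The main obstacle I anticipate is controlling the correction form $\mathfrak{D}$ precisely: one must verify that $\mathbf{\Omega}^\nabla+\mathfrak{D}$ is genuinely closed on $SM$ and that its fibre restriction is the full volume form $\V(x)$ rather than only the top-degree curvature part. This requires tracking the vertical derivatives of the Cartan connection and checking that the Mathai-Quillen transgression absorbs exactly the terms that, in Chern's original approach, forced the delicate combinatorial identity (1.2). The payoff of the index-theoretic formulation is that this closedness is automatic from the Thom form's properties, so the hard computational identity is replaced by the functoriality of the Mathai-Quillen construction, and only the identification of $\mathfrak{D}$ with a concrete Cartan-tensor expression remains to be carried out.
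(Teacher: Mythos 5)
Your outline reproduces the paper's own strategy in broad strokes: the paper's form $U_t=\mathscr{B}(e^{-\Theta_t})$ with $\Theta_t=t^2/2+it\nabla\ell+\Omega$ \emph{is} the Mathai--Quillen Gaussian pulled back along the scaled tautological section $t\ell$ of $\pi^*TM\to SM$, and the proof does proceed by transgression, division by $\V(x)$, Stokes and Poincar\'e--Hopf. But two steps of your plan, as stated, would fail. First, the normalization property you demand is dimensionally impossible: the fibres $S_xM$ of $SM\to M$ are $(n-1)$-dimensional, so the $n$-form $\mathbf{\Omega}^\nabla+\mathfrak{D}$ restricts to \emph{zero} on every fibre; it cannot ``restrict to the full volume form $\V(x)$''. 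The Thom form's fibre-integral-one property lives on the $\mathbb{R}^n$-fibres of the vector bundle $\pi^*TM\to SM$, a different fibration, and does not transfer to the sphere fibration. What the degree argument actually needs is an explicit $(n-1)$-form primitive whose restriction to each $S_xM$ is proportional to $d\V(x)$. In the paper this is $\Upsilon_1$, the $\Phi_0$-component of the primitive $\Pi$ obtained by integrating the transgression from $t=0$ to $t=\infty$ (using $U_t\to 0$, so that $\Pf(-\Omega)=U_0=d\Pi$ is \emph{exact}, not merely closed) and then expanding the Berezin integral to recover Chern's forms $\Phi_k$ (Lemma 4.1), together with the identity $\Phi_0|_{S_xM}=(n-1)!\,d\V(x)$. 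This expansion is exactly the computation you hope functoriality will spare you; it cannot, because closedness of the Euler form alone gives no handle on the boundary integrals.

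Second, dividing by $\V(x)$ is not a harmless normalization, since $\V(x)$ is non-constant on a general Finsler manifold. Your $\mathfrak{D}$ is defined as a ($\V$-independent) difference of two closed forms, so $(\mathbf{\Omega}^\nabla+\mathfrak{D})/\V(x)$ is \emph{not} closed as an $n$-form on the $(2n-1)$-manifold $SM$, hence has no primitive on $SM$ to which Stokes can be applied; and while its pullback to $M\setminus\bigcup B_\epsilon(x_i)$ is trivially closed and even exact (top degree on a compact manifold with boundary), that abstract exactness is useless because you cannot evaluate the boundary integrals of an unknown primitive. The paper resolves this by building the $\V$-dependence into the correction term, $\mathfrak{D}:=-d\Upsilon_2-d\log\V(x)\wedge\Upsilon_1$, which is engineered precisely so that one has the identity $(\mathbf{\Omega}^\nabla+\mathfrak{D})/\V(x)=d\bigl(\Upsilon_1/\V(x)\bigr)$ on $SM$; the boundary integrals of $\Upsilon_1/\V(x)$ are then computed as $(-1)^n\,\mathrm{index}(X;x_i)/\vol(\mathbb{S}^{n-1})$ by the degree argument of Lemma 4.2, which is where Lackey's trick (constancy of the normalized fibre integral of $\Upsilon_1/\V$) actually enters. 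Note in passing that with the paper's $\mathfrak{D}$ the form $\mathbf{\Omega}^\nabla+\mathfrak{D}$ is \emph{not} closed (only its quotient by $\V$ is exact), and that $\mathbf{\Omega}^\nabla$ is already exact on $SM$; so both the closedness you propose to verify and your description of $\mathfrak{D}$ as the torsion-induced discrepancy between two Euler forms aim at the wrong statements -- $\mathfrak{D}$ records the discarded higher-curvature terms $\Upsilon_2$ and the non-constancy of $\V(x)$, nothing else.
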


It is noticeable that $(1-s)D+s\nabla$ is a metric-compatible connection, for any metric-compatible connection $D$.
Thus, by Theorem 1.1 and a transgression formula (see Section 4), we obtain the following
\begin{theorem}Let $(M,F)$ be a closed Finsler $n$-manifold and $X$ be a vector field with isolated zeros.
Given any metric-compatible connection $D$, we have
\[
-\int_{M}[X]^*\left(\frac{\mathbf{\Omega}^D+\mathfrak{E}}{\V(x)}\right)=\frac{\chi(M)}{\vol(\mathbb{S}^{n-1})},
\]
where $(\mathfrak{E}-\mathfrak{D})$ is some exact $n$-form on $SM$.
\end{theorem}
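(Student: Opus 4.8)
The plan is to connect the Cartan connection $\nabla$ to the given metric-compatible connection $D$ by the affine path $D_s:=(1-s)D+s\nabla$, $s\in[0,1]$, and to transport the identity of Theorem 1.1 from $s=1$ back to $s=0$. The first observation is that every $D_s$ is again metric-compatible: the metric-compatible connections on $\pi^*TM$ form an affine space modelled on the skew-symmetric $\operatorname{End}(\pi^*TM)$-valued $1$-forms, so $\beta:=D-\nabla$ is such a form and each convex combination preserves metric-compatibility, with $D_0=D$ and $D_1=\nabla$. Writing $\Omega^{(s)}$ for the curvature of $D_s$, I would differentiate the Euler form $\mathbf{\Omega}^{D_s}$ in $s$ by the usual Chern--Weil variation and integrate over $[0,1]$ to obtain a transgression
\[
\mathbf{\Omega}^{D}-\mathbf{\Omega}^{\nabla}=dQ,\qquad Q=\int_0^1 P(\beta,\Omega^{(s)})\,ds,
\]
with $Q$ an explicit, globally defined $(n-1)$-form on $SM$ built from a universal polynomial $P$ in $\beta$ and the curvatures $\Omega^{(s)}$.

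Next I would define $\mathfrak{E}$ by the same recipe that produces $\mathfrak{D}$ in Theorem 1.1, now fed with $D$ instead of $\nabla$, so that $\mathfrak{E}$ is the companion $n$-form attached to $D$. The transgression formula of Section 4 should then deliver two facts. First, $\mathfrak{E}-\mathfrak{D}=dR$ for an explicit $(n-1)$-form $R$, which is exactly the exactness asserted in the statement. Second, the transgression pieces combine so that the \emph{normalized} total forms differ by an exact form on $SM$,
\[
\frac{\mathbf{\Omega}^{D}+\mathfrak{E}}{\V}-\frac{\mathbf{\Omega}^{\nabla}+\mathfrak{D}}{\V}=d\Sigma ,
\]
for a globally defined $(n-1)$-form $\Sigma$. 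Here one must track that $\V>0$ is pulled back from $M$, so that dividing by $\V$ commutes with $d$ only up to the horizontal factor $d\V$; the forms $\mathfrak{D}$ and $\mathfrak{E}$ are precisely what absorb these $d\V$-corrections, and verifying this absorption is the routine-but-delicate bookkeeping of the argument.

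With $\Sigma$ in hand, I would excise small balls $B_\varepsilon(x_i)$ about the zeros, set $M_\varepsilon:=M\setminus\bigcup_i B_\varepsilon(x_i)$, and apply Stokes to $[X]^*d\Sigma=d([X]^*\Sigma)$ on $M_\varepsilon$, where $[X]$ is smooth. Subtracting the identity of Theorem 1.1, the desired formula for $D$ becomes equivalent to
\[
\int_{M}[X]^*\Big(\tfrac{\mathbf{\Omega}^{D}+\mathfrak{E}}{\V}-\tfrac{\mathbf{\Omega}^{\nabla}+\mathfrak{D}}{\V}\Big)=-\lim_{\varepsilon\to0}\sum_i\int_{\partial B_\varepsilon(x_i)}[X]^*\Sigma=0 .
\]
This last vanishing is where I expect the real work to lie. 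As $\varepsilon\to0$ the restriction $[X]|_{\partial B_\varepsilon(x_i)}$ wraps $\partial B_\varepsilon\cong\mathbb{S}^{n-1}$ onto the fibre $S_{x_i}M$ with degree equal to the index of $X$ at $x_i$, so the boundary term tends to $\operatorname{ind}_{x_i}(X)\int_{S_{x_i}M}\Sigma|_{S_{x_i}M}$. Hence everything reduces to showing that the fibre integral of the transgression form vanishes, equivalently that $\tfrac{\mathbf{\Omega}^{D}+\mathfrak{E}}{\V}$ and $\tfrac{\mathbf{\Omega}^{\nabla}+\mathfrak{D}}{\V}$ integrate to the \emph{same} constant over each fibre $S_xM$. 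I would establish this by identifying the common fibre integral as a topological invariant: up to the $\V$-normalization it computes the Euler number of the fibre sphere (equivalently, via the Mathai--Quillen picture, the fibre integral of a Thom form), so it equals a connection-independent constant, $\Sigma$ has vanishing fibre integral, and Theorem 1.2 follows from Theorem 1.1.
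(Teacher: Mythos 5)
Your opening move coincides with the paper's: both take the affine path $D_s=(1-s)D+s\nabla$ of metric-compatible connections and the Chern--Weil/Berezin transgression $\mathbf{\Omega}^{D}-\mathbf{\Omega}^{\nabla}=dQ$ (in the paper $Q=-\Upsilon_3$, with $\Upsilon_3=(2\pi)^{-n/2}\int_0^1\mathscr{B}\bigl(\exp(-\Omega_s)\cdot\tfrac{\partial D_s}{\partial s}\bigr)ds$). But at that point the paper simply \emph{defines} $\mathfrak{E}:=\mathfrak{D}-dQ=\mathfrak{D}+d\Upsilon_3$, so that $\mathbf{\Omega}^{D}+\mathfrak{E}=\mathbf{\Omega}^{\nabla}+\mathfrak{D}$ holds \emph{identically} as forms on $SM$; Theorem 1.2 is then Theorem 1.1 verbatim, with no Stokes argument, no boundary terms, and no fibre integrals. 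You instead define $\mathfrak{E}$ by feeding $D$ into the recipe that produced $\mathfrak{D}$, i.e.\ $\mathfrak{E}=-d\Upsilon_2^D-d\log\V\wedge\Upsilon_1^D$ built from the connection and curvature forms of $D$, and this is where your proof breaks down.

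With that intrinsic definition, your final claim --- that the fibre integral of $\Sigma=\frac{\Upsilon_1^D-\Upsilon_1}{\V}$ vanishes because it is ``a connection-independent topological invariant'' --- is false. What makes Lemma 4.2 (hence Theorem 1.1) work is the identity $i_x^*(\varpi_1^n\wedge\cdots\wedge\varpi_{n-1}^n)=d\V(x)$, equivalently $\Phi_0|_{S_xM}=(n-1)!\,d\V(x)$; this is a special property of the Cartan (and Chern) connection, not of metric-compatible connections at large. A general metric-compatible $D$ differs from $\nabla$ by an arbitrary skew-symmetric-valued $1$-form $\beta$, which is completely unconstrained along the fibres: already for $n=2$, taking $\beta_1^2=f\,\varpi_1^2$ with $f>0$ gives $\Phi_0^D|_{S_xM}=(1+f)\,d\V$, so $\int_{S_xM}\Upsilon_1^D/\V$ exceeds $1/\vol(\mathbb{S}^{1})$ by the arbitrary positive quantity $\frac{1}{2\pi\V(x)}\int_{S_xM}f\,d\V$. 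Consequently the boundary contributions do not cancel, and on $M=S^2$ with $X$ having two index-one zeros your version of the formula fails; likewise $\mathfrak{E}-\mathfrak{D}$ acquires the term $-d\log\V\wedge(\Upsilon_1^D-\Upsilon_1)$, which is not exact in general, so the exactness assertion fails too. Your appeal to Mathai--Quillen does not rescue this: the invariance there concerns the Thom form integrated over the $n$-dimensional \emph{vector} fibres (with Gaussian decay), and the paper's remark following Lemma 3.3 stresses precisely that this picture cannot be transplanted to $\pi^*TM$ over $SM$; sphere-fibre integrals of transgression pieces carry no such rigidity. The repair, however, is one line: discard the intrinsic recipe and set $\mathfrak{E}:=\mathfrak{D}-dQ$, after which your excision and fibre-integral machinery becomes unnecessary.
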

See Section 4 below for the precise formulas of $\mathfrak{D}$ and $\mathfrak{E}$. It is remarkable that Theorem 1.1 and Theorem 1.2 are independent of the choice of the vector field $X$.

In the Riemannian case, $\mathbf{\Omega}^\nabla=\pi^*\mathbf{\Omega}$, $\V(x)=\vol(\mathbb{S}^{n-1})$, $\int_M[X]^*\mathfrak{D}=0$ and $(\mathfrak{E}-\mathfrak{D})$ is an exact $n$-form pulled back from $M$ (see Remark 3-4 below). Since $[X]^*\pi^*=\id$, both Theorem 1.1 and Theorem 1.2 imply the GBC formula for Riemannian manifolds\cite{BGV,Ch1,Ch2}.

We remark that Shen\cite{Sh2} also established a GBC formula for any metric-compatible connection and for all Finsler manifolds. But the formula and the method in \cite{Sh2} are different from ours here. Refer to \cite{Sh2} for more details.

\proof[Acknowledgements]The authors wish to thank Professor Y-B. Shen for his advice and encouragement.
This work was supported partially
by National Natural Science Foundation of China (Grant No.
11171297).

\section{Preliminaries}
In this paper, the rules that govern our index gymnastics are as follows: Latin indices run from $1$ to $n$; Greek indices run from $1$ to $n-1$.

A Finsler $n$-manifold $(M,F)$ is an $n$-dimensional differential manifold $M$ equipped with a Finsler metric $F$ which is a nonnegative function on $TM$ satisfying the following two conditions:

(1) $F$ is positively homogeneous, i.e., $F(\lambda y)=\lambda F(y)$, for any $\lambda>0$ and $y\in TM$;

(2) $F$ is smooth on $TM\backslash\{0\}$ and the Hessian $\frac{1}{2}[F^2]_{y^iy^j}(x,y)$ is positive definite, where $F(x,y):=F(y^i\frac{\partial}{\partial x^i}|_x)$.

Let $\pi:SM\rightarrow M$ and $\pi^*TM$ be the projective sphere bundle and the pullback bundle, respectively. For each $(x,[y])\in SM$, the distinguished section $\ell$ of $\pi^*TM$ is defined by
\[
\ell_{(x,[y])}=\frac{y^i}{F(y)}{\partial_i},
\]
where $\partial_i:=(x,[y],\frac{\partial}{\partial x^i}|_x)$, $i=1,\cdots, n$ denote the local natural frame of $\pi^*TM$.
The Finsler metric $F$ induces a natural Riemannian metric $g:=g_{ij}\,dx^i\otimes dx^j$ and the Cartan tensor $A:=A_{ijk}\,dx^i\otimes dx^j\otimes dx^k$ on  $\pi^*TM$, where
\[
g_{ij}(x,[y]):=\frac12\frac{\partial^2
F^2(x,y)}{\partial y^i\partial
y^j},\ \ A_{ijk}(x,[y]):=\frac{F}{4}\frac{\partial^3 F^2(x,y)}{\partial
y^i\partial y^j\partial y^k}.
\]

In general, there is no linear connection in $\pi^*TM$ such that it is not only "torsion-free" but also "metric-compatible". There are two important connections  in $\pi^*TM$, which are the Cartan connection\cite{C} and the Chern connection\cite{BCS}. The former is compatible with $g$ while the latter is torsion-free. From now on, we denote by $\nabla$ the Cartan connection and $\bar{\nabla}$ the Chern connection.

Throughout this paper, we assume that $\{e_i\}$ is a local orthonormal frame field for $\pi^*TM$, where $e_n=\ell$, and $\{\omega^i\}$ is the dual frame field. Let $\nabla e_i=:\varpi_i^j\otimes e_j$ and $\bar{\nabla} e_i=:\bar{\varpi}_i^j\otimes e_j$. According to \cite{BCS,C}, these two connections are characterized by the following structure
equations:
\begin{align*}
\text{Cartan connection }&\nabla:\left\{
\begin{array}{lll}
&d\omega^i-\omega^j\wedge \varpi^i_j=-A^i_{j\alpha}\omega^j\wedge \varpi^\alpha_n,\\
&\varpi^i_j+\varpi^j_i=0.
\end{array}
\right.\\
\text{Chern connection }&\bar{\nabla}:\left\{
\begin{array}{lll}
&d\omega^i-\omega^j\wedge \bar{\varpi}^i_j=0,\\
&\bar{\varpi}^i_j+\bar{\varpi}^j_i=-2A_{i\alpha}^j\bar{\varpi}^\alpha_n.
\end{array}
\right.
\end{align*}
In fact, $\varpi^j_i=\bar{\varpi}^j_i+A^j_{i\alpha}\bar{\varpi}_n^\alpha$ (see \cite[p.39]{BCS}). Since $A^i_{nj}=0$, $\varpi^i_n=\bar{\varpi}^i_n$. Given any $x\in M$, let $S_xM:=\pi^{-1}(x)$ and $i_x:S_xM\hookrightarrow SM$ be the injective map. It follows from \cite{BS} that the Riemannian volume form $d\V(x)$ of $S_xM$ induced by $g$ satisfies
\[
d\V(x)=i^*_x(\varpi^n_1\wedge\cdots\wedge\varpi_{n-1}^n)=i^*_x(\bar{\varpi}_1^n\wedge\cdots\wedge\bar{\varpi}_{n-1}^n).
\]

\section{A transgression formula for the Cartan connection} \label{ns}
For convenience, let $\mathscr{A}^{i,j}:=\Gamma(SM,\wedge^iT^*SM\otimes \wedge^j \pi^*TM)$ and $\mathscr{A}:=\sum_{i,j}\mathscr{A}^{i,j}$. In this section, we will investigate $\mathscr{A}$ and derive a transgression formula for the Cartan connection.

Clearly, $\mathscr{A}$ is a bigraded algebra (cf. \cite{MQ}). Hence, for $a\otimes b\in \mathscr{A}^{i,j}$ and $c\otimes d\in \mathscr{A}^{k,l}$, the product of $a\otimes b$ and $c\otimes d$ is defined by
\[
(a\otimes b)\cdot(c\otimes d)=(-1)^{jk}(a\wedge c)\otimes (b\wedge d).
\]
For each $s\in \Gamma(SM,\pi^*TM)$, the contraction $\iota(s):\mathscr{A}^{i,j}\rightarrow \mathscr{A}^{i,j-1}$ is defined by
\[
\iota (s)(\alpha\otimes s_1\wedge\ldots\wedge s_j)=\underset{k}{\sum}(-1)^{i+k-1}g(s,s_k)\,\alpha \otimes (s_1\wedge\ldots\wedge\hat{s_k}\wedge\ldots \wedge s_j).
\]

We always identify $\mathfrak{so}(\pi^*TM)$ with $\wedge^2 \pi^*TM$ by the map
\[
B\in \mathfrak{so}(\pi^*TM)\rightarrow \frac12\underset{i,j}{\sum}g(Be_i,e_j)e_i\wedge e_j.
\]
Note that the Cartan connection is a operator from $\mathscr{A}^{i,j}$ to $\mathscr{A}^{i+1,j}$, i.e.,
\[
\nabla(a\otimes b)=da\otimes b+(-1)^ia\wedge \nabla b , \ \forall\, a\otimes b \in \mathscr{A}^{i,j}.
\]
In particular, $\nabla\Omega=0$. Here, we view the curvature of the Cartan connection $\Omega$ as an element of $\mathscr{A}^{2,2}$.

An easy calculation yields the following proposition, which is useful in this paper.
\begin{proposition}
For each $s\in \mathscr{A}^{0,1}$, $\alpha\in \mathscr{A}^{i,j}$ and $\beta\in \mathscr{A}^{k,k}$, we have
\begin{align*}
(1)&\ \alpha\cdot\beta=\beta\cdot\alpha;\\
(2)&\ \iota (s)(\alpha\cdot\beta)=(\iota (s)\alpha)\cdot \beta+(-1)^{i+j}\alpha\cdot (\iota (s)\beta);\\
(3)&\ \nabla(\alpha\cdot\beta)=(\nabla\alpha)\cdot\beta+(-1)^{i+j}\alpha\cdot(\nabla\beta).
\end{align*}
\end{proposition}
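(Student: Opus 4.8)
The plan is to re-grade $\mathscr{A}$ by the \emph{total} degree $\deg(a\otimes b):=i+j$ for $a\otimes b\in\mathscr{A}^{i,j}$, under which all three statements become standard structural identities of a super-algebra: (1) is super-commutativity, (2) says that $\iota(s)$ is an odd super-derivation of total degree $-1$, and (3) says that $\nabla$ is an odd super-derivation of total degree $+1$. In each case bilinearity reduces the claim to a computation on decomposable elements $\alpha=a\otimes b$ and $\beta=c\otimes d$, and what remains is purely sign bookkeeping.

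For (1), I would first prove, for general $\beta\in\mathscr{A}^{k,l}$, the super-commutativity rule $\alpha\cdot\beta=(-1)^{(\deg\alpha)(\deg\beta)}\beta\cdot\alpha$. From the definition, $\alpha\cdot\beta=(-1)^{jk}(a\wedge c)\otimes(b\wedge d)$ while $\beta\cdot\alpha=(-1)^{li}(c\wedge a)\otimes(d\wedge b)$; reordering the form factors contributes $(-1)^{ik}$ and the bundle factors $(-1)^{lj}$, and one checks that the exponent $li+ik+lj$ agrees with $(i+j)(k+l)+jk$ modulo $2$. Since $\beta\in\mathscr{A}^{k,k}$ has even total degree $2k$, the sign is trivial and $\alpha\cdot\beta=\beta\cdot\alpha$ follows at once.

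For (2), the Leibniz sign $(-1)^{i+j}=(-1)^{\deg\alpha}$ is exactly that of an odd derivation. Writing $\alpha=a\otimes s_1\wedge\cdots\wedge s_j$ and $\beta=c\otimes t_1\wedge\cdots\wedge t_k$, the element $\iota(s)(\alpha\cdot\beta)$ is a sum of contractions over the concatenated bundle word $s_1,\dots,s_j,t_1,\dots,t_k$. I would split this sum into the contractions hitting an $s$-factor and those hitting a $t$-factor, match the first group against $(\iota(s)\alpha)\cdot\beta$ and the second against $(-1)^{i+j}\alpha\cdot(\iota(s)\beta)$, and verify that the position-signs differ only by an even power of $-1$ in the first group and agree exactly in the second.

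For (3), I would apply the defining rule $\nabla(a\otimes b)=da\otimes b+(-1)^i a\wedge\nabla b$ to $\alpha\cdot\beta=(-1)^{jk}(a\wedge c)\otimes(b\wedge d)$, then expand using that $d$ is a graded anti-derivation on $\wedge^\bullet T^*SM$ and that $\nabla$ obeys the ordinary connection Leibniz rule $\nabla(b\wedge d)=(\nabla b)\wedge d+b\wedge(\nabla d)$ on $\wedge^\bullet\pi^*TM$. Writing $\nabla b=\rho\otimes b'$ and $\nabla d=\sigma\otimes d'$, the four resulting terms must be matched against $(\nabla\alpha)\cdot\beta+(-1)^{i+j}\alpha\cdot(\nabla\beta)$. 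I expect this to be the main obstacle: the connection $1$-forms $\rho,\sigma$ must be commuted past the form factor $c$ (producing a $(-1)^k$), and this has to be reconciled simultaneously with the product sign $(-1)^{jk}$ and with the form-degree signs generated by $d$. Keeping these independent sign sources separate is where an error is most likely, but once the four terms are aligned the identity drops out, and the restriction $\beta\in\mathscr{A}^{k,k}$ turns out not to be needed.
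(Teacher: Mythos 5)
Your proposal is correct. For comparison: the paper gives no proof at all --- Proposition 3.1 is prefaced only by ``An easy calculation yields the following proposition'' --- so your decomposable-element computation is precisely the calculation the author leaves to the reader, and your total-degree framing organizes it better than a bare sign chase would. Regrading $\mathscr{A}$ by $\deg(a\otimes b)=i+j$ explains the otherwise odd-looking factor $(-1)^{i+j}$ in (2) and (3) (both $\iota(s)$ and $\nabla$ are odd super-derivations, of total degrees $-1$ and $+1$), and it isolates the only point where the hypothesis $\beta\in\mathscr{A}^{k,k}$ is actually used: part (1), where the general Koszul sign $(-1)^{(i+j)(k+l)}$ becomes trivial because $\deg\beta=2k$ is even. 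Your sign bookkeeping checks out: in (1), $li+ik+lj\equiv(i+j)(k+l)+jk\pmod 2$; in (2), with the paper's convention $(-1)^{i+k-1}$ in the definition of $\iota(s)$, the contractions landing in the first factor differ from those of $(\iota(s)\alpha)\cdot\beta$ by $(-1)^{2k}=1$, while those landing in the second factor reproduce $(-1)^{i+j}\alpha\cdot(\iota(s)\beta)$ on the nose; and in (3) the four terms align as you predict, with (2) and (3) indeed valid for arbitrary $\beta\in\mathscr{A}^{k,l}$, not just $l=k$. One small correction to your sketch of (3): only the connection $1$-forms $\rho$ produced by $\nabla b$ need to be commuted past $c$ (costing the $(-1)^k$ you mention); the forms $\sigma$ coming from $\nabla d$ already sit to the right of $c$ on both sides of the identity, so no sign arises from them.
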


\begin{remark}
In fact, Proposition 3.1 holds for any connection on $\pi^*TM$.
\end{remark}

It is not hard to see that
\[
0=\iota(\ell)\nabla\ell,\ \nabla(\nabla \ell)=\iota(\ell)\Omega,\  \iota(\ell)\ell=1. \eqno(3.1)
\]
(3.1) together with Proposition 3.1 yields immediately the following proposition.
\begin{proposition}Define $\Theta_t:=t^2/2+it\nabla \ell+\Omega$. Then
\[
(\nabla-it \iota(\ell))f(\Theta_t)=0,
\]
where $f:\mathbb{R}\rightarrow \mathbb{R}$ is a smooth function and
\[
f(\Theta_t):=\overset{\infty}{\underset{k=0}{\sum}}\frac{f^{(k)}(t^2/2)}{k!}(it\nabla \ell+\Omega)^k.
\]
\end{proposition}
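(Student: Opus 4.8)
The plan is to introduce the operator $L:=\nabla-it\,\iota(\ell)$ and the ``variable part'' $\Xi:=it\nabla\ell+\Omega$, so that $f(\Theta_t)=\sum_{k\ge 0}\frac{f^{(k)}(t^2/2)}{k!}\,\Xi^k$, and to prove that $L\Xi^k=0$ for every $k$. Since $\Xi$ has only positive form-degree it is nilpotent in $\mathscr{A}$, so the defining sum is finite; as the coefficients $f^{(k)}(t^2/2)$ are constants for fixed $t$ and both $\nabla$ and $\iota(\ell)$ are $\mathbb{R}$-linear, it then suffices to apply $L$ term by term to conclude $Lf(\Theta_t)=0$.

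The first step is to observe that $\Xi$ is a sum of \emph{balanced} elements, namely $\nabla\ell\in\mathscr{A}^{1,1}$ and $\Omega\in\mathscr{A}^{2,2}$, and hence so is every power $\Xi^k$, because $\mathscr{A}^{a,a}\cdot\mathscr{A}^{b,b}\subset\mathscr{A}^{a+b,a+b}$. On such balanced elements $L$ is an (even) derivation: applying Proposition 3.1(2) and (3) to $\alpha\in\mathscr{A}^{i,i}$ and $\beta\in\mathscr{A}^{k,k}$ the sign $(-1)^{i+j}=(-1)^{2i}=1$ disappears, so that
\[
\nabla(\alpha\cdot\beta)=(\nabla\alpha)\cdot\beta+\alpha\cdot(\nabla\beta),\qquad \iota(\ell)(\alpha\cdot\beta)=(\iota(\ell)\alpha)\cdot\beta+\alpha\cdot(\iota(\ell)\beta),
\]
and subtracting $it$ times the second relation from the first yields $L(\alpha\cdot\beta)=(L\alpha)\cdot\beta+\alpha\cdot(L\beta)$ for all balanced $\alpha,\beta$.

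The heart of the argument, and the step I expect to require the most care, is the base identity $L\Xi=0$. Here the three structural relations in (3.1) do all the work: from $\nabla\Omega=0$ and $\nabla(\nabla\ell)=\iota(\ell)\Omega$ we get $\nabla\Xi=it\,\iota(\ell)\Omega$, while from $\iota(\ell)\nabla\ell=0$ we get $\iota(\ell)\Xi=\iota(\ell)\Omega$, hence $it\,\iota(\ell)\Xi=it\,\iota(\ell)\Omega$; subtracting gives $L\Xi=\nabla\Xi-it\,\iota(\ell)\Xi=0$. The only delicate point is tracking signs and degrees when invoking (3.1), since $\nabla\ell\in\mathscr{A}^{1,1}$ and $\iota(\ell)\Omega\in\mathscr{A}^{2,1}$ are themselves not balanced; but $\Xi$ is, which is all that the derivation property requires.

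Finally, I would combine the two steps. Because $L$ is a derivation on balanced elements and $L\Xi=0$, an immediate induction gives $L\Xi^k=\sum_{j=0}^{k-1}\Xi^{j}\cdot(L\Xi)\cdot\Xi^{k-1-j}=0$ for every $k\ge 1$, while $L\Xi^0=L(1)=\nabla 1-it\,\iota(\ell)1=0$. Summing against the scalar coefficients $f^{(k)}(t^2/2)/k!$ and using the nilpotence of $\Xi$ to guarantee that only finitely many terms survive, we obtain $(\nabla-it\,\iota(\ell))f(\Theta_t)=0$, as claimed.
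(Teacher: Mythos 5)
Your proof is correct and follows exactly the route the paper intends: the paper states this proposition as an immediate consequence of (3.1) together with Proposition 3.1, which is precisely what you spell out --- the base identity $(\nabla-it\,\iota(\ell))(it\nabla\ell+\Omega)=0$ from (3.1), the even-derivation property of $\nabla-it\,\iota(\ell)$ on balanced elements from Proposition 3.1(2),(3), and induction on powers. One minor slip in a parenthetical remark: $\nabla\ell\in\mathscr{A}^{1,1}$ \emph{is} balanced (only $\iota(\ell)\Omega\in\mathscr{A}^{2,1}$ is not), but this does not affect the argument, since the base identity uses only linearity and (3.1), no product rule.
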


Let $\mathscr{A}(SM):=\sum_{i}\mathscr{A}^{i,0}$.
Since $\pi^*TM$ is an oriented bundle, we can induce the Berezin integral $\mathscr{B}$ (cf. \cite{BGV}) to $\mathscr{A}$ such that
\[
\mathscr{B}:a\otimes \eta\in \mathscr{A}\mapsto a(\mathscr{B}\eta)\in \mathscr{A}(SM),
\]
with
\[
\mathscr{B}(e_I)=\left\{
\begin{array}{lll}
&\epsilon_I,&|I|=n,\\
&0,&\text{otherwise}.
\end{array}
\right.
\]
Moreover, it follows from \cite[Proposition 1.50]{BGV} that
\[
d \mathscr{B}(\xi)=\mathscr{B}(\nabla\xi), \ \text{ for any }\xi\in \mathscr{A}.\eqno(3.2)
\]

Combining Proposition 3.2 and (3.2), we have the following
\begin{lemma}
$U_t:=\mathscr{B}({e^{-\Theta_t}})$ is a closed $n$-form on $SM$.
\end{lemma}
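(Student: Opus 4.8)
The plan is to show that $U_t := \mathscr{B}(e^{-\Theta_t})$ is both well-defined and closed. First I would interpret the exponential: since $\Theta_t = t^2/2 + it\nabla\ell + \Omega$ and the term $it\nabla\ell + \Omega$ lies in $\sum_{j\geq 1}\mathscr{A}^{*,2}$ (a nilpotent-type piece because $\pi^*TM$ has finite rank $n$, so any product of more than $n/2$ such $2$-forms-in-$\pi^*TM$ vanishes under $\mathscr{B}$), the series defining $e^{-\Theta_t}$ terminates. Concretely I would write $e^{-\Theta_t} = e^{-t^2/2}\, e^{-(it\nabla\ell+\Omega)}$, where the second factor is the finite sum $\sum_{k}\frac{(-1)^k}{k!}(it\nabla\ell+\Omega)^k$. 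Applying $\mathscr{B}$ kills all terms except the ones whose $\pi^*TM$-degree equals $n$, so $U_t$ is a genuine (finite) $n$-form on $SM$, confirming it lands in $\mathscr{A}(SM)$.

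Next I would prove closedness by exploiting the two tools just established. The key identity is (3.2), $d\,\mathscr{B}(\xi) = \mathscr{B}(\nabla\xi)$, which reduces $dU_t = d\,\mathscr{B}(e^{-\Theta_t})$ to computing $\mathscr{B}(\nabla e^{-\Theta_t})$. Here I would take $f(x) = e^{-x}$ in Proposition 3.2, which gives
\[
(\nabla - it\,\iota(\ell))\,e^{-\Theta_t} = 0,
\]
that is, $\nabla e^{-\Theta_t} = it\,\iota(\ell)\,e^{-\Theta_t}$. Substituting into (3.2) yields $dU_t = it\,\mathscr{B}(\iota(\ell)\,e^{-\Theta_t})$, so the whole matter collapses to showing that the Berezin integral annihilates any form in the image of the contraction $\iota(\ell)$.

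The crux is therefore the vanishing $\mathscr{B}\circ\iota(\ell) = 0$. This is where I expect the real (though short) work to be, and it is a purely algebraic fact about the Berezin integral: $\mathscr{B}$ picks out the top $\pi^*TM$-degree component, namely the coefficient of $e_1\wedge\cdots\wedge e_n$, whereas $\iota(\ell)$ strictly lowers the $\pi^*TM$-degree by one, so its image contains no term of full degree $n$ and is killed by $\mathscr{B}$. I would make this precise by evaluating $\mathscr{B}(\iota(\ell)(e_I))$ on a basis: contraction produces elements $e_{I\setminus\{k\}}$ of length at most $n-1$, each of which satisfies $\mathscr{B}(e_{I\setminus\{k\}}) = 0$ by definition. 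Hence $\mathscr{B}(\iota(\ell)\,e^{-\Theta_t}) = 0$, giving $dU_t = 0$.

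Putting these together, $U_t$ is a well-defined $n$-form on $SM$ with $dU_t = 0$, which is exactly the assertion. The only subtlety worth flagging is bookkeeping with the bigrading and the sign conventions in the product rule (Proposition 3.1(2)) and the definition of $\iota(s)$, but these do not affect the structural argument: the proof is driven entirely by the transgression-type identity of Proposition 3.2 together with the degree-counting property $\mathscr{B}\circ\iota(\ell)=0$.
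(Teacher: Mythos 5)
Your proof is correct and takes essentially the same approach as the paper: both reduce $dU_t$ to $\mathscr{B}(\nabla e^{-\Theta_t})$ via (3.2) and then kill it using Proposition 3.2 with $f(x)=e^{-x}$. The only difference is that you make explicit the degree-counting fact $\mathscr{B}\circ\iota(\ell)=0$, which the paper uses implicitly when it writes $dU_t=\mathscr{B}[(\nabla-it\,\iota(\ell))(e^{-\Theta_t})]=0$.
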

\begin{proof}
Since ${e^{-\Theta_t}}\in \mathscr{A}^{n,n}$, $U_t$ is a $n$-form on $SM$. Now it follows from (3.2) and Proposition 3.2 that
\[
dU_t=\mathscr{B}[(\nabla-it\iota(\ell))({e^{-\Theta_t}})]=0.
\]
\end{proof}
\begin{remark}Mathai-Quillen's proof of the formula (1.1) was carried out by constructing a Thom form on $TM$, which pulled back by the zero-section is exactly the Euler form. Although $U_1$ is similar to the Mathai-Quillen's Thom form restricted to $SM$, the argument in \cite{MQ} cannot be applied in Finsler manifolds directly, since the connection form of any metric-compatible (or torsion-free) connection on $\pi^*TM$ cannot be extended to the zero-section for a general Finsler metric.
\end{remark}
From above, we obtain the following transgression formula.
\begin{lemma}
\[
\frac{d}{dt}U_t=-i d\left[\mathscr{B}\left(\ell\cdot {e^{-\Theta_t}}\right)\right].
\]
\end{lemma}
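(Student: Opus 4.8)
The plan is to differentiate $U_t=\mathscr{B}(e^{-\Theta_t})$ directly and then recognize the outcome as the exterior derivative of $-i\,\mathscr{B}(\ell\cdot e^{-\Theta_t})$, using the two Leibniz rules of Proposition 3.1, the horizontality relation of Proposition 3.2, and the fact (3.2) that $\mathscr{B}$ intertwines $\nabla$ and $d$.

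First, since $\mathscr{B}$ is $\mathbb{R}$-linear, $\frac{d}{dt}U_t=\mathscr{B}\!\left(\frac{d}{dt}e^{-\Theta_t}\right)$. Now $\Theta_t=t^2/2+it\nabla\ell+\Omega$ is assembled from the scalar $t^2/2$, the element $\nabla\ell\in\mathscr{A}^{1,1}$ and the curvature $\Omega\in\mathscr{A}^{2,2}$, all of even total degree, so by Proposition 3.1(1) they commute in $\mathscr{A}$. Differentiating the defining power series termwise and commuting the factor $i\nabla\ell$ past the remaining powers, I get $\frac{d}{dt}e^{-\Theta_t}=-(t+i\nabla\ell)\cdot e^{-\Theta_t}$, whence
\[
\frac{d}{dt}U_t=-t\,U_t-i\,\mathscr{B}\!\left((\nabla\ell)\cdot e^{-\Theta_t}\right).
\]

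Next I would expand the right-hand side of the claimed identity. By (3.2), $-i\,d\,\mathscr{B}(\ell\cdot e^{-\Theta_t})=-i\,\mathscr{B}\!\left(\nabla(\ell\cdot e^{-\Theta_t})\right)$. Applying the Leibniz rule of Proposition 3.1(3) to $\ell\in\mathscr{A}^{0,1}$ gives $\nabla(\ell\cdot e^{-\Theta_t})=(\nabla\ell)\cdot e^{-\Theta_t}-\ell\cdot\nabla e^{-\Theta_t}$. To handle $\nabla e^{-\Theta_t}$ I invoke Proposition 3.2 with $f(x)=e^{-x}$, for which $f(\Theta_t)=e^{-\Theta_t}$; this yields $\nabla e^{-\Theta_t}=it\,\iota(\ell)e^{-\Theta_t}$. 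Then the contraction Leibniz rule of Proposition 3.1(2) together with $\iota(\ell)\ell=1$ from (3.1) gives $\ell\cdot(\iota(\ell)e^{-\Theta_t})=e^{-\Theta_t}-\iota(\ell)(\ell\cdot e^{-\Theta_t})$. Since $\iota(\ell)$ lowers the $\pi^*TM$-degree below $n$, the Berezin integral annihilates the last term, so $\mathscr{B}(\ell\cdot(\iota(\ell)e^{-\Theta_t}))=U_t$. Assembling these, the right-hand side equals $-t\,U_t-i\,\mathscr{B}((\nabla\ell)\cdot e^{-\Theta_t})$, which matches the expression for $\frac{d}{dt}U_t$ above.

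The routine part is the sign bookkeeping in the two Leibniz rules and the clean observation that $\mathscr{B}\circ\iota(\ell)=0$. The only genuinely delicate step is the very first one, namely justifying that $e^{-\Theta_t}$ differentiates like a scalar exponential. This rests on the commutativity of the even-degree summands of $\Theta_t$ furnished by Proposition 3.1(1), which lets me treat the even part of $\mathscr{A}$ as a commutative ring and differentiate the series term by term; I would verify this commutativity explicitly before carrying out the termwise differentiation, since the bigraded product of $\mathscr{A}$ is only graded-commutative in general.
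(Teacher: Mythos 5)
Your proof is correct and uses exactly the same ingredients as the paper's: termwise differentiation of $e^{-\Theta_t}$ via the commutativity in Proposition 3.1(1), the annihilation identity of Proposition 3.2, the Leibniz rules, and the facts that $\mathscr{B}\circ\nabla=d\circ\mathscr{B}$ and $\mathscr{B}\circ\iota(\ell)=0$. The only difference is organizational --- the paper writes $\frac{d}{dt}\Theta_t=i(\nabla-it\iota(\ell))\ell$ and transposes the single derivation $\nabla-it\iota(\ell)$ across the product in one step, whereas you split it into its $\nabla$ and $\iota(\ell)$ pieces and verify that both sides of the identity reduce to $-tU_t-i\,\mathscr{B}\bigl((\nabla\ell)\cdot e^{-\Theta_t}\bigr)$; this is the same argument unrolled.
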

\begin{proof}In view of (3.1), we have
$\frac{d}{dt}\Theta_t=i(\nabla-it\iota(\ell))\ell$. This together with Lemma 3.3 and (3.2) yields
\begin{align*}
\frac{d}{dt}U_t=&-\mathscr{B}\left((i(\nabla-it\iota(\ell))\ell)\cdot{e^{-\Theta_t}}\right)\\
=&-i \mathscr{B}((\nabla-it\iota(\ell)) (\ell\cdot{e^{-\Theta_t}}))\\
=&-i d[ \mathscr{B}(\ell\cdot{e^{-\Theta_t}})].
\end{align*}
\end{proof}

\section{Proofs of Theorem 1.1 and Corollary 1.2}
Recall that $\Omega\in \mathscr{A}^{2,2}$. According to \cite[Definition 1.35]{BGV}, the Pfaffian of $-\Omega$ is defined by
\[
\Pf(-\Omega):=\mathscr{B}(\exp(-\Omega)).\eqno(4.1)
\]
Note that  $U_t=e^{-t^2/2}\mathscr{B}(e^{-(it\nabla\ell+\Omega)})\rightarrow 0$ (as $t\rightarrow\infty$). Hence, Lemma 3.4 yields
\[
\Pf(-\Omega)=U_0=i d\left[\int^\infty_0\mathscr{B}(\ell\cdot{e^{-\Theta_t}})dt\right]\ \in \mathscr{A}^n(SM).
\]
Denote by $\Xi$ the component of $e^{-(ti\nabla\ell+\Omega)}$ in $\mathscr{A}^{n-1,n-1}$. Thus,
\[
\Pf(-\Omega)=id \left[\int^\infty_0 e^{-t^2}\mathscr{B}(\ell\cdot\Xi) dt\right].\eqno(4.2)
\]
Using this simple observation, we easily get the following formula.
\begin{lemma}
\[
\Pf(-\Omega)=(-1)^{n-1}d\left(\overset{\left[\frac{n-1}{2}\right]}{\underset{k=0}{\sum}}\frac{(-1)^k2^{\frac{n}{2}}\Phi_k}{k!(n-1-2k)!2^{2k+1}}\Gamma\left(\frac{n-2k}{2}\right)\right),
\]
where $\Gamma(s)$ is the gamma function and
\[
\Phi_k:=\sum\epsilon_{\alpha_1\ldots \alpha_{n-1}}\Omega_{\alpha_1}^{\alpha_2}\wedge\cdots\wedge\Omega_{\alpha_{2k-1}}^{\alpha_{2k}}\wedge \varpi_{\alpha_{2k+1}}^n\wedge\cdots\wedge \varpi_{\alpha_{n-1}}^n.
\]
\end{lemma}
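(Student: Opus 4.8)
The plan is to evaluate the integral representation (4.2) by carrying out, in order, an algebraic expansion, a Berezin integral, and a Gaussian moment integral, and then collecting the constants.

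First I would expand $e^{-(it\nabla\ell+\Omega)}$ and isolate its $\mathscr{A}^{n-1,n-1}$ part $\Xi$. Since $\nabla\ell\in\mathscr{A}^{1,1}$ and $\Omega\in\mathscr{A}^{2,2}$ both have diagonal bidegree, Proposition 3.1(1) shows they are central, so the binomial theorem applies termwise with no ordering ambiguity. A monomial with $j$ factors $it\nabla\ell$ and $k$ factors $\Omega$ lies in $\mathscr{A}^{j+2k,\,j+2k}$, and the bidegree constraint $j+2k=n-1$ forces $j=n-1-2k$. After simplifying the multinomial coefficient via $\binom{n-1-k}{k}/(n-1-k)!=1/(k!(n-1-2k)!)$, this gives
\[
\Xi=\sum_{k=0}^{[\frac{n-1}{2}]}\frac{(-1)^{n-1-k}}{k!\,(n-1-2k)!}(it)^{n-1-2k}(\nabla\ell)^{n-1-2k}\cdot\Omega^k,
\]
which already accounts for the two factorials in the claimed denominator.

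The heart of the argument is the Berezin integral $\mathscr{B}(\ell\cdot(\nabla\ell)^{n-1-2k}\cdot\Omega^k)$. Here I would insert $\ell=e_n$, $\nabla\ell=\sum_\alpha\varpi^\alpha_n\otimes e_\alpha$ (using $\varpi^n_n=0$) and $\Omega=\tfrac12\sum_{i,j}\Omega_i^j\otimes e_i\wedge e_j$ under the identification $\mathfrak{so}(\pi^*TM)\cong\wedge^2\pi^*TM$, multiply out in $\mathscr{A}$, and apply $\mathscr{B}$. Since the vector part of every surviving monomial is a top exterior power $e_n\wedge e_{\alpha_1}\wedge\cdots$, $\mathscr{B}$ replaces it by the permutation sign $\epsilon$; after using $\varpi^n_\alpha=-\varpi^\alpha_n$ and antisymmetrizing, the index sum collapses onto a single normalized copy of $\Phi_k$. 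This step supplies the factor $2^{-k}$ (one $\tfrac12$ per copy of $\Omega$) together with a sign that will combine with the remaining factors.

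Finally I would perform the Gaussian moment integral coming from the scalar factor $e^{-t^2/2}$ in $\Theta_t$, namely $\int_0^\infty e^{-t^2/2}t^{\,n-1-2k}\,dt=2^{\,n/2-1-k}\,\Gamma(\tfrac{n-2k}{2})$, which produces the gamma value together with the remaining powers of $2$: the $2^{n/2}$ in the numerator and, jointly with the $2^{-k}$ from the Berezin step, the full $2^{2k+1}$ in the denominator. Multiplying by the prefactor $i$ of (4.2) and assembling the three sign sources, the powers of $i$ (which are real precisely when $n-2k$ is even, i.e.\ exactly when the coefficient survives), the powers of $2$, and the factorials then reproduces the stated coefficient $\frac{(-1)^k2^{n/2}}{k!(n-1-2k)!2^{2k+1}}\Gamma(\tfrac{n-2k}{2})$ and the overall sign $(-1)^{n-1}$. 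I expect the only genuine difficulty to be this bookkeeping in the middle step: keeping the graded-commutativity signs of the product on $\mathscr{A}$, the permutation signs from $\mathscr{B}$, and the flips $\varpi^n_\alpha=-\varpi^\alpha_n$ simultaneously consistent while correctly counting the multiplicities that reduce the full index sum to one normalized $\Phi_k$.
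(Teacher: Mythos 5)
Your route---expanding $e^{-(it\nabla\ell+\Omega)}$, isolating the $\mathscr{A}^{n-1,n-1}$ component via the bidegree constraint, evaluating the Berezin integral to collapse the index sum onto $\Phi_k$ with a factor $2^{-k}$, and then computing the Gaussian moments $\int_0^\infty e^{-t^2/2}t^{n-1-2k}\,dt=2^{n/2-1-k}\Gamma\left(\tfrac{n-2k}{2}\right)$---is exactly the paper's proof, and your bookkeeping of binomial coefficients, factorials and powers of $2$ is correct. (You even use the correct Gaussian weight $e^{-t^2/2}$ dictated by $\Theta_t=t^2/2+it\nabla\ell+\Omega$; the paper's display (4.2) writes $e^{-t^2}$, which is a typo, as the stated coefficients confirm.)

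The genuine gap is the parenthetical by which you dispose of the powers of $i$: it is false that the coefficients ``survive exactly when $n-2k$ is even.'' Every term $0\le k\le\left[\tfrac{n-1}{2}\right]$ carries a nonzero coefficient whatever the parity of $n$; what is true is that the accumulated factor $i\cdot i^{\,n-1-2k}(-1)^{n-1-k}=i^{\,n-2k}(-1)^{n-1-k}$ is real exactly when $n$ is even. For odd $n$ your assembly therefore terminates in
\[
\Pf(-\Omega)=\pm\, i\, d\left(\overset{\left[\frac{n-1}{2}\right]}{\underset{k=0}{\sum}}\frac{(-1)^k2^{\frac{n}{2}}\Phi_k}{k!(n-1-2k)!2^{2k+1}}\Gamma\left(\frac{n-2k}{2}\right)\right),
\]
with a stray factor of $i$ that cannot be absorbed into the real coefficients of the lemma, so the plan as written fails to produce the stated identity in odd dimensions. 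The missing observation (which the paper supplies explicitly) is that for odd $n$ the left-hand side vanishes: by (4.1), $\exp(-\Omega)$ has only even exterior degree in the fibre variables, so its top-degree Berezin integral is zero. The displayed identity then forces $d\left(\sum_k\cdots\right)=0$, and the lemma holds for odd $n$ with both sides equal to zero. Once you add this one step, your argument is complete and coincides with the paper's.
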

\begin{proof}
It is not hard to see that
\begin{align*}
\Xi&=\overset{\infty}{\underset{k=0}{\sum}}\frac{(-1)^k}{k!}\left(\underset{\{s:(k-s)+2s=n-1,\,0\leq s \leq k\}}{\sum}
\left(
\begin{array}{lll}
k\\
s
\end{array}
\right)
(ti\nabla\ell)^{k-s}\cdot\Omega^s\right)\\
&=\overset{n-1}{\underset{k=\left[\frac{n}{2}\right]}{\sum}}\frac{(-1)^k}{(n-1-k)!(2k-(n-1))!}(ti\nabla\ell)^{2k-(n-1)}\cdot\Omega^{n-1-k}\\
&=(-i)^{n-1}\overset{\left[\frac{n-1}{2}\right]}{\underset{k=0}{\sum}}\frac{(t\nabla\ell)^{n-1-2k}\cdot\Omega^{k}}{k!(n-1-2k)!}.
\end{align*}

Clearly,
\begin{align*}
(\nabla\ell)^k&=(-1)^{\frac{k(k-1)}{2}}\varpi^{\alpha_1}_n\wedge\cdots\wedge\varpi^{\alpha_k}_n\otimes e_{\alpha_1}\wedge\cdots\wedge e_{\alpha_k},\\
\Omega^k&=\frac{1}{2^k}\Omega_{j_1}^{j_2}\wedge\cdots\wedge \Omega_{j_{2k-1}}^{j_{2k}}\otimes e_{j_1}\wedge\cdots\wedge e_{j_{2k}}.
\end{align*}
Combining $\ell=e_n$ and all the equalities above, we have
\[
\ell\cdot\Xi=\frac{(-1)^{n-1}}{\epsilon(n+1)}\overset{\left[\frac{n-1}{2}\right]}{\underset{k=0}{\sum}}\frac{t^{n-1-2k}(-1)^k}{k!(n-1-2k)!2^k}\Phi_k\otimes e_1\wedge\cdots\wedge e_n,
\]
where \begin{align*}
\epsilon(n):=\left\{
\begin{array}{lll}
&1,\ n=2p,\\
&i,\ n=2p+1.
\end{array}
\right.
\end{align*}
Now it follows from (4.1) that
\begin{align*}
\Pf(-\Omega)
=(-1)^{n-1}\epsilon(n)d\left(\overset{\left[\frac{n-1}{2}\right]}{\underset{k=0}{\sum}}\frac{(-1)^k2^{\frac{n}{2}}\Phi_k}{k!(n-1-2k)!2^{2k+1}}\Gamma\left(\frac{n-2k}{2}\right)\right).
\end{align*}
However, if $n=2p+1$, then (4.1) implies $\Pf(-\Omega)=0$ and (therefore)
\[
(-1)^{n-1}d\left(\overset{\left[\frac{n-1}{2}\right]}{\underset{k=0}{\sum}}\frac{(-1)^k2^{\frac{n}{2}}\Phi_k}{k!(n-1-2k)!2^{2k+1}}\Gamma\left(\frac{n-2k}{2}\right)\right)
=0=\Pf(-\Omega).
\]
\end{proof}

Define $\mathbf{\Omega}^\nabla:=\frac{-1}{(2\pi)^{\frac{n}{2}}}\Pf(-\Omega)$ and
\[
\Pi:=\frac{(-1)^n}{\pi^{\frac{n}{2}}}\left(\overset{\left[\frac{n-1}{2}\right]}{\underset{k=0}{\sum}}\frac{(-1)^k\Phi_k}{k!(n-1-2k)!2^{2k+1}}\Gamma\left(\frac{n-2k}{2}\right)\right).
\]
An easy calculation yields that
\begin{align*}
\mathbf{\Omega}^\nabla&=\left\{
\begin{array}{lll}
&\frac{(-1)^{p-1}}{2^{2p}\pi^pp!}\epsilon_{i_1\ldots i_{2p}}\Omega_{i_1}^{i_2}\wedge\cdots\wedge \Omega_{i_{2p-1}}^{i_{2p}},&n=2p,\\
\\
&0,&n=2p+1.
\end{array}
\right.\\
\Pi&=\left\{
\begin{array}{lll}
&\frac{1}{(2\pi)^p}\overset{p-1}{\underset{k=0}{\sum}}\frac{(-1)^k}{(2p-2k-1)!!k!2^k}\Phi_k,&n=2p,\\
\\
&\frac{1}{\pi^p2^{2p+1}p!}\overset{p}{\underset{k=0}{\sum}}(-1)^{k+1}\left(
\begin{array}{lll}
p\\
k
\end{array}
\right)\Phi_k,&n=2p+1.
\end{array}
\right.
\end{align*}
Hence, $\Omega^\nabla$ and $\Pi$ are of the same form as the ones defined in \cite{Ch1,Ch2,BC,Sh1}.
And Lemma 4.1 implies that $\mathbf{\Omega}^\nabla=d\Pi$. Clearly, this formula holds for any metric-compatible connection. For simplicity, set
\begin{align*}
\Upsilon_1&:=\frac{(-1)^n}{2\pi^{\frac{n}{2}}}\frac{\Phi_0}{(n-1)!}\Gamma\left(\frac{n}{2}\right),\\ \Upsilon_2&:=\frac{(-1)^n}{\pi^{\frac{n}{2}}}\left(\overset{\left[\frac{n-1}{2}\right]}{\underset{k=1}{\sum}}\frac{(-1)^k\Phi_k}{k!(n-1-2k)!2^{2k+1}}\Gamma\left(\frac{n-2k}{2}\right)\right),\\
\mathfrak{D}&:=-{d\Upsilon_2}-d\log\V(x)\wedge \Upsilon_1.
\end{align*}

Applying the same technique as in \cite{Sh1}, we obtain the following lemma.
\begin{lemma}Given any vector filed $X$ on $M$ with isolated zeros $\{x\}$, let $[X]:M\backslash \cup \{x\}\rightarrow SM \backslash \cup S_{x}M$ denote the section induced by $X$. Then, for each isolate zero $x$ and each small $\epsilon>0$,
\[
\int_{\partial B^+_{x}(\epsilon)}[X]^*\left(\frac{\Upsilon_1}{\V(x)}\right)=(-1)^n\frac{\text{index}(X;x)}{\vol(\mathbb{S}^{n-1})}.
\]
\end{lemma}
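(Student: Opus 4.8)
The plan is to recognize the claimed identity as a Finsler analogue of the classical Kronecker integral for the index of a vector field, once $\Upsilon_1$ is put into a transparent form. First I would simplify $\Upsilon_1$. Since $\Phi_0$ carries no curvature factors, the permutation symbol collapses the sum to $\Phi_0=(n-1)!\,\varpi^n_1\wedge\cdots\wedge\varpi^n_{n-1}$. Substituting this into the definition of $\Upsilon_1$ and using $\vol(\mathbb{S}^{n-1})=2\pi^{n/2}/\Gamma(n/2)$ yields
\[
\Upsilon_1=\frac{(-1)^n}{\vol(\mathbb{S}^{n-1})}\,\varpi^n_1\wedge\cdots\wedge\varpi^n_{n-1}.
\]
The constant $(-1)^n/\vol(\mathbb{S}^{n-1})$ then factors out of both sides, so the lemma is equivalent to the single clean identity
\[
\int_{\partial B^+_x(\epsilon)}[X]^*\!\left(\frac{\varpi^n_1\wedge\cdots\wedge\varpi^n_{n-1}}{\V(x)}\right)=\text{index}(X;x).
\]

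For this I would argue by local degree theory. By the formula $d\V(x)=i^*_x(\varpi^n_1\wedge\cdots\wedge\varpi^n_{n-1})$ recalled in Section 2, the $(n-1)$-form $\varpi^n_1\wedge\cdots\wedge\varpi^n_{n-1}$ restricts on each fibre $S_xM$ to its Riemannian volume form, so after division by the scalar $\V(x)$ it restricts to the normalized fibre volume form, of total mass $1$. Over the contractible ball $\overline{B^+_x(\epsilon)}$ the sphere bundle trivializes, $SM|_{\overline{B^+_x(\epsilon)}}\cong\overline{B^+_x(\epsilon)}\times\mathbb{S}^{n-1}$, and in such a trivialization the section $[X]$ restricted to $\partial B^+_x(\epsilon)$ takes the form $(\id,g)$, where $g:\partial B^+_x(\epsilon)\to\mathbb{S}^{n-1}$ is the normalized vector field $y\mapsto[X(y)]$ whose local degree at $x$ is, by definition, $\text{index}(X;x)$. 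Because $\nabla\ell=\varpi^\alpha_n\otimes e_\alpha$, the fibre (vertical) part of $[X]^*(\varpi^n_1\wedge\cdots\wedge\varpi^n_{n-1})$ is the Jacobian of $g$ measured against the fibre volume, so that --- up to the horizontal corrections treated below --- integrating the normalized form returns $\deg(g)=\text{index}(X;x)$.

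The main obstacle is that $\varpi^n_1\wedge\cdots\wedge\varpi^n_{n-1}/\V(x)$ is not closed on $SM$: its exterior derivative produces the higher curvature polynomials $\Phi_k$ $(k\ge1)$ together with a $d\log\V(x)$ term, so one cannot invoke homotopy invariance of the degree directly. The delicate point is to verify that the horizontal and curvature corrections to $[X]^*\!\left(\varpi^n_1\wedge\cdots\wedge\varpi^n_{n-1}/\V(x)\right)$ contribute nothing over $\partial B^+_x(\epsilon)$, so that only the purely vertical Jacobian of $g$ survives. I would handle this exactly by the technique of Shen \cite{Sh1}: compare $[X]$ with a reference section that factors through the single fibre $S_xM$, where the fibre restriction of the previous step applies verbatim, and check via Stokes that the difference integrates to zero. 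This localization, together with the normalization by $\V(x)$ that absorbs the nonconstant fibre volume, gives the exact equality for every sufficiently small $\epsilon$.
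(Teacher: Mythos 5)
Your setup coincides with the paper's own proof: the reduction $\Upsilon_1=\frac{(-1)^n}{\vol(\mathbb{S}^{n-1})}\,\varpi^n_1\wedge\cdots\wedge\varpi^n_{n-1}$ (via $\Phi_0|_{S_xM}=(n-1)!\,d\V(x)$ and $\vol(\mathbb{S}^{n-1})=2\pi^{n/2}/\Gamma(n/2)$) and the identification of the integral with a local degree are exactly the paper's route; the paper merely packages the localization differently, composing $[X]$ with the degree-one diffeomorphism $\kappa([y])=\exp_x\left(\frac{\epsilon y}{F(y)}\right)$ to get $\varphi_\epsilon:S_xM\to SM$ with $\deg(\varphi_\epsilon)=\text{index}(X;x)$, and then asserting $\int_{S_xM}\varphi_\epsilon^*\left(\Upsilon_1/\V\right)=\deg(\varphi_\epsilon)\int_{S_xM}\Upsilon_1/\V$.

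The gap is the step you yourself flag as ``the delicate point'' and then defer to ``the technique of Shen'': the claim that, comparing $[X]|_{\partial B^+_x(\epsilon)}$ with a reference section factoring through the fibre, ``the difference integrates to zero'' by Stokes. For fixed $\epsilon$ this is false, not merely unproved. If $H:S_xM\times[0,1]\to SM$ is such a comparison homotopy, Stokes leaves the bulk term
\[
\int_{S_xM\times[0,1]}H^*\,d\!\left(\frac{\Upsilon_1}{\V(x)}\right),
\qquad
d\!\left(\frac{\Upsilon_1}{\V(x)}\right)=\frac{\mathbf{\Omega}^\nabla+\mathfrak{D}}{\V(x)}\neq 0,
\]
which has no reason to vanish. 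Concretely, already for a Riemannian surface one has $d\varpi^2_1=-\pi^*(K\,dA)$, so $\int_{\partial B_x(\epsilon)}[X]^*\varpi^2_1$ changes with $\epsilon$ by an annular curvature integral; on the round sphere the claimed exact equality for every small $\epsilon$ therefore fails. What your argument does establish is the limiting statement as $\epsilon\to 0^+$: the homotopy can be squeezed into $SM|_{\overline{B^+_x(\epsilon)}}$, its limit factors through the single fibre $S_xM$, on which $\Upsilon_1/\V$ restricts to a top (hence closed) form, so the bulk term tends to zero. That limit version is all that the proof of Theorem 1.1 actually needs ($\epsilon\to0^+$ is taken at the end), and, in fairness, the paper's own one-line degree identity has the same defect --- $\int_N f^*\omega=\deg(f)\int\omega$ is a cohomological statement requiring $\omega$ closed, which $\Upsilon_1/\V$ is not on $SM$. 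So: you follow the same route as the paper, but the step you leave unexecuted is the entire content of the lemma, and in the exact form you state it (vanishing of the Stokes difference for fixed $\epsilon$) it cannot be carried out.
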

\begin{proof}
Define a map $\varphi_\epsilon: S_xM\rightarrow SM$ by
\[
\varphi_\epsilon([y])=[X]\circ\kappa([y]),
\]
where $\kappa([y]):=\exp_x\left(\frac{\epsilon y}{F(y)}\right)$. $(\exp_x)_{*0}=\text{id}$ implies that $\kappa$ is a diffeomorphism between $S_xM$ and $\partial B^+_x(\epsilon)$ and (therefore) $\text{deg}(\kappa)=1$.
Hence,
\[
\text{deg}(\varphi_\epsilon)=\text{deg}([X])\circ\text{deg}(\kappa)=\text{deg}([X])=\text{index}(X;x).
\]
Recall that $\vol(\mathbb{S}^{n-1})=2\pi^{n/2}/(\Gamma(n/2))$ and $\Phi_0|_{S_xM}=(n-1)!d\V(x)$. Thus,
\begin{align*}
&\int_{\partial B^+_{x}(\epsilon)}[X]^*\left(\frac{\Upsilon_1}{\V(x)}\right)=\int_{\kappa(S_xM)}[X]^*\left(\frac{\Upsilon_1}{\V(x)}\right)=\int_{S_xM}\kappa^*\circ[X]^*\left(\frac{\Upsilon_1}{\V(x)}\right)\\
=&\int_{S_xM}\varphi_\epsilon^*\left(\frac{\Upsilon_1}{\V(x)}\right)=\text{deg}(\varphi_\epsilon)\int_{S_xM}\frac{\Upsilon_1}{\V(x)}=(-1)^n\frac{\text{index}(X;x)}{\vol(\mathbb{S}^{n-1})}.
\end{align*}
\end{proof}

From above, we now prove Theorem 1.1.
\begin{proof}[ \textbf{Proof of Theorem 1.1}]
It is easy to see that
\[
\frac{\mathbf{\Omega}^\nabla}{\V(x)}=\frac{d\Pi}{\V(x)}=d\left(\frac{\Pi}{\V(x)}\right)-d\left(\frac1{\V(x)}\right)\wedge \Pi=d\left(\frac{\Upsilon_1}{\V(x)}\right)-\frac{\mathfrak{D}}{\V(x)}.\eqno(4.3)
\]
Let $\{x_i\}$ be the isolated zeros of $X$. Choose a small $\varepsilon>0$ such that the forward balls $B^+_{x_i}(\varepsilon)$ are disjoint from each other. (4.3) together with Lemma 4.2 now yields
\begin{align*}
&\int_{M\backslash \cup B^+_{xi}(\epsilon)}[X]^*\left(\frac{\mathbf{\Omega}^\nabla+\mathfrak{D}}{\V(x)}\right)=\int_{M\backslash \cup B^+_{xi}(\epsilon)}[X]^*d\left(\frac{\Upsilon_1}{\V(x)}\right)\\
=&-\underset{i}{\sum}\int_{\partial B^+_{x_i}(\epsilon)}[X]^*\left(\frac{\Upsilon_1}{\V(x)}\right)= \frac{(-1)^{n+1}}{\vol(\mathbb{S}^{n-1})}\underset{i}{\sum}\text{index}(X;x_i)=(-1)^{n+1}\frac{\chi(M)}{\vol(\mathbb{S}^{n-1})}.
\end{align*}
Note that $\chi(M)=0$ when $n=2p+1$. We finish the proof by letting $\epsilon\rightarrow 0^+$.
\end{proof}

\begin{remark}In the Riemannian case, $\V(x)=\vol(\mathbb{S}^{n-1})$ and $\nabla$ is the pull-back connection induced by the Levi-Civita connection. Hence, $\Upsilon_2$ has no pure-$dy$ part and (therefore) $\int_M[X]^*\mathfrak{D}=\lim_{\epsilon\rightarrow0}\int_{M\backslash\cup B^+_{x_i}(\epsilon)}[X]^*\mathfrak{D}=0$. Thus, the theorem above implies the Gauss-Bonet-Chern theorem\cite{Ch1,Ch2}. However, in the non-Riemannian case, $\Upsilon_2$ may has the pure-$dy$ part even when $\V(x)=\text{const}$. Refer to \cite{Sh1} for more interesting results in this case.
\end{remark}

Let ${D_s}$ denote a family of connections compatible with $g$. Set ${D_s}=:d+\omega_s$, where $\omega_s\in \mathscr{A}^1(SM)\otimes \mathfrak{so}(\pi^*TM)$. Thus, $\frac{\partial{D_s}}{\partial s}=\frac12\sum\frac{\partial(\omega_s)^j_i}{\partial s} e_i\wedge e_j\in \mathscr{A}^{1,2}$ and
\[
\frac{\partial}{\partial s}\Omega_s= {D_s}\left(\frac{\partial {D_s}}{\partial s}\right)\in \mathscr{A}^{2,2},
\]
where $\Omega_s$ is the curvature of ${D_s}$. Define $\mathbf{\Omega}^{{D_s}}:=-\frac{\Pf(-\Omega_s)}{(2\pi)^{n/2}}$. $D_s\Omega_s=0$ together with (3.2) now yields
\begin{align*}
\frac{\partial}{\partial s}\mathbf{\Omega}^{{D_s}}&=\frac{-1}{(2\pi)^{\frac{n}{2}}}\frac{\partial}{\partial s}\mathscr{B}(\exp(-\Omega_s))=\frac{1}{(2\pi)^{\frac{n}{2}}}\mathscr{B}\left({D_s}\left(\exp(-\Omega_s)\cdot\frac{\partial{D_s}}{\partial s}\right)\right)\\
&=\frac{1}{(2\pi)^{\frac{n}{2}}}d\mathscr{B}\left(\exp(-\Omega_s)\cdot\frac{\partial{D_s}}{\partial s}\right).
\end{align*}

By the argument above, we obtain Corollary 1.2 directly.
\begin{proof}[\textbf{Proof of Theorem 1.2}]
Set $D_s=s\nabla+(1-s)D$. The transgression formula above implies that
$\mathbf{\Omega}^\nabla=\mathbf{\Omega}^D+d\Upsilon_3$, where
$\Upsilon_3:={(2\pi)^{\frac{-n}{2}}}\int^1_0\mathscr{B}\left(\exp(-\Omega_s)\cdot\frac{\partial{D_s}}{\partial s}\right)ds$. Set $\mathfrak{E}=\mathfrak{D}+d\Upsilon_3$.
Corollary 1.2 now follows from Theorem 1.1.
\end{proof}
\begin{remark}
In the Riemannian case, Theoerm 1.2 implies that, for any metric-compatible connection $D$, $\int_M \mathbf{\Omega}^D=\chi(M)$, that is, $\mathbf{\Omega}^D$ is a Euler form (cf. \cite[Theorem 1.56]{BGV}).
\end{remark}

\end{document}